\newtheorem{theorem}{Theorem}[section]
\newtheorem{lemma}{Lemma}[section]
\theoremstyle{definition}
\newtheorem{definition}{Definition}[section]
\newcommand{\R}{{\mathbb R}}
 \def\1{\raisebox{2pt}{\rm{$\chi$}}}
\def\undertilde#1{\mathord{\vtop{\ialign{##\crcr
$\hfil\displaystyle{#1}\hfil$\crcr\noalign{\kern1.5pt\nointerlineskip}
$\hfil\tilde{}\hfil$\crcr\noalign{\kern1.5pt}}}}}
\DeclareMathAlphabet{\mathpzc}{OT1}{pzc}{m}{it}
\title{
The limit as $s\nearrow 1$ of the fractional convex envelope
}
\author[B. Barrios]{Bego\~na Barrios}
	\address{Bego\~na Barrios \hfill\break\indent
		Departamento de An\'{a}lisis Matem\'{a}tico,
		Universidad de La Laguna
		\hfill \break \indent C/. Astrof\'{\i}sico Francisco S\'{a}nchez s/n, 
		38200 -- La Laguna, SPAIN}
	\email{bbarrios@ull.edu.es}
    \author[L. M. Del Pezzo]{Leandro M. Del Pezzo}
	\address{Leandro M. Del Pezzo \hfill\break\indent
            IESTA --Facultad de Ciencias Económicas y de Administración 	
            \hfill\break\indent
            Universidad de la República\hfill\break\indent
            Av. Gonzalo Ramírez 1926, 11200 Montevideo,\hfill\break\indent
            Departamento de Montevideo - Uruguay.}
\email{leandro.delpezzo@fcea.edu.uy}
\author[A. Quaas]{Alexander Quaas}
\address{A. Quaas
       \hfill\break\indent Departamento de Matem\'atica, Universidad T\'ecnica Federico Santa Mar\'ia 
        \hfill\break\indent Casilla V-110, Avda. Espa\~na, 1680 -- Valpara\'iso, CHILE.}
    \email{alexander.quaas@usm.cl}
\author[J. D. Rossi]{Julio D. Rossi}
	\address{Julio D. Rossi \hfill\break\indent
        Departamento  de Matem{\'a}tica, FCEyN,
        Universidad de Buenos Aires,
        \hfill\break\indent Pabellon I,
         Ciudad Universitaria (C1428BCW), 
        Buenos Aires, Argentina.}
\email{jrossi@dm.uba.ar}
\begin{document}

\maketitle

\begin{abstract} 
    We study the behavior of the fractional 
    convexity when the fractional parameter goes to 1. 
    For any notion of convexity, 
    the convex envelope of a datum prescribed on the boundary 
    of a domain is defined as the largest 
    possible convex function inside the domain that is below the datum on the boundary. 
	Here we prove that the fractional convex envelope inside 
	a strictly convex domain of a continuous and bounded exterior datum converges when
	$s\nearrow 1$ to the classical convex envelope of the restriction to the boundary of the exterior datum. 
\end{abstract}

\section{Introduction}
	The purpose of this paper is to study the limit when $s\nearrow 1$ 
	of fractional convex envelopes of a prescribed exterior datum.
	
	Let us first recall the usual notion of convexity and the notion of fractional convexity (introduced in 
	\cite{DelpeQR}) in the Euclidean space. Along the whole paper $\Omega\subset\R^N$ will denote an open, bounded, strictly convex
	(given two points $x,y \in \overline{\Omega}$ the segment $tx+(1-t)y$, $t\in (0,1)$
	is included in $\Omega$) and smooth ($C^2$) domain.
		
\subsection{Classical convexity} 
	A function $u\colon \Omega \to  {\R}$ is said to be convex in $\Omega$ if,
	for any two points $x,y \in \Omega$, it holds that
	\begin{equation} \label{convexo-usual}
		u(tx+(1-t)y) \leq tu(x) + (1-t) u(y), \quad t \in (0,1).
	\end{equation}
	Notice that $t \mapsto v(t) \coloneqq  tu(x) + (1-t) u(y)$ is just the solution to the equation 
	$v''=0$ in the interval $[0,1]$ that verifies $v(1) = u(x) $ and $v(0) = u(y)$ at the endpoints.
	We refer to \cite{Vel} for a general reference on convexity. 
	
	With this notion of convexity one can define the convex envelope inside $\Omega$ of a boundary
	datum $g\colon \partial \Omega \to  {\R}$ as the largest convex function in $\overline\Omega$
	that is below $g$ on the boundary, that is, 
	\begin{equation} \label{convex-envelope-usual}
		\Upsilon_g (x) \coloneqq \sup \Big\{v(x) \colon v 
		\mbox{ is convex in $\overline{\Omega}$ and verifies } v|_{\partial \Omega} \leq g \Big\}.
	\end{equation}
	
	In terms of a second order partial differential equation, a continuous function is convex 
	in $\Omega$ if and only if 
	\[
		\inf_{z\in\mathbb{S}^{N-1}} 
			\langle D^2 u(x) z, z \rangle 
		 \geq 0,\, x\in\Omega,
	\] 
	inside $\Omega$ in the viscosity sense, (see \cite{OS,Ober}). 
	Here $\mathbb{S}^{N-1}\coloneqq \{z\in \R^N \colon |z|=1\}$ denotes 
	the $(N-1)-$dimensional sphere. 
	
	Moreover, the convex envelope of $g$, a continuous datum on the boundary, in a strictly convex 
	domain turns out to be the unique solution to
	\begin{equation}
			\label{convex-envelope-usual-eq-1}	
			\left\{
			\begin{array}{ll}
			\displaystyle \inf_{z\in\mathbb{S}^{N-1}}
			\langle D^2 u(x) z, z \rangle = 0, \quad &x\in  \Omega, \\[6pt]
		\displaystyle	u(x) = g (x),  \quad &x\in  \partial \Omega.
		\end{array}
		\right.
	\end{equation}
	Therefore, we have that the operator that is associated to the convex envelope
	is just the infimum of the second directional derivatives of the function among all possible directions. 
	We refer to \cite{BlancRossi,HL1,Ober, OS}, and references therein.
	
	The equation \eqref{convex-envelope-usual-eq-1} has to be
	interpreted in the viscosity sense (see below for the precise definition) and the boundary condition is 
	attained with continuity. 
		
	\subsection{Fractional convexity} In \cite{DelpeQR}
	the authors introduce the following natural extension of convexity to the fractional setting. 
	Given $s\in(0,1),$ a function $u\colon\R^N \to  {\R}$ 
	is said to be {\it $s-$convex} in $\Omega$ if
	for any two points $x,y \in \Omega$ it holds that
	\begin{equation} \label{convexo-s}
		u(t x+(1-t)y) \leq v(t), \quad t \in (0,1),
	\end{equation}
	where $v$ is just the viscosity solution 
	to $\Delta^s_1 v =0$ (the 1-dimensional $s-$fractional laplacian)
	in the segment $[0,1]$ with $v(t) = u(t x+(1-t)y)$ outside the segment. That is, $v$ verifies
	\[
		\Delta^s_1 v(t)
		\coloneqq c(s)\int_{\R} \frac{v(t+r)-v(t )}{|r|^{1+2s}} \, dr = 0,
	\]
	for every $t \in (0,1)$ with 
	\[
		v(t)=u(tx+(1-t)y) \quad \mbox{ for }  t\not\in (0,1).
	\] 
	 
	The integral has to be understood  in the principal value 
	sense. Here the constant $c(s)$ that appears in front of the integral is given by the formula 
    \begin{equation} \label{cte}
         c(s):= \frac{2^{2s}s\Gamma  (s+1/2)}{\pi^{1/2} \Gamma  (1-s)}.
    \end{equation} 
    We just remark that any $c(s)$ such that $c(s) \sim (1-s)$ will also work for our purposes
    when we take the limit as $s\nearrow 1$, but the
    explicit form of \eqref{cte} is the one that corresponds to the 1-dimensional fractional Laplacian 
    (see for instance \cite{Hich}). 
         
	Notice that as usual for the fractional laplacian we have to impose an exterior datum. That is, we have to use values of $u$ outside $\Omega$ since 
	the involved operator is nonlocal, therefore
	$u$ has to be defined in the whole $\R^N$. 
	
	With this definition of $s-$convexity one can define the {\it $s-$convex envelope} of an exterior 
	datum $g \colon \R^N\setminus \Omega \to  {\R}$ as
	\begin{equation} \label{convex-envelope-s}
		\Upsilon^s_g (x) \coloneqq \sup 
		    \Big\{w(x) \colon w \mbox{ is $s-$convex in $\overline{\Omega}$ and verifies } 
			w|_{\R^N\setminus \Omega} \leq g \Big\}.
	\end{equation}
	This definition makes sense when the above set of functions is not empty. 
	In  particular, this is the case
	when there exists an extension of $g$ inside $\Omega$ that is $s-$convex and from 
	the results in \cite{DelpeQR} this holds when
	$g$ is continuous and bounded. Moreover, the function 
	$\Upsilon^s_g(x)$ is unique and $s-$convex.
	
	Concerning the equation that is associated with this
	notion of fractional convexity, we get that a function is $s-$convex if and only 
	if
	$$
	    \Lambda_1^s u(x):=\inf_{z\in\mathbb{S}^{N-1}} c(s)
				 \int_{\R} 
					\frac{u(x+tz)-u(x)}{|t|^{1+2s}} 
					\, dt \geq 0,\, x\in\Omega,
	$$
	 in the viscosity sense. Moreover, we know that, assuming 
	that $g$ is continuous and bounded, the $s-$convex envelope is continuous 
	in $\overline{\Omega}$ (up to the boundary) with  
	$u|_{\partial \Omega} =g|_{\partial \Omega}$ and is characterized as being the
	unique viscosity solution to
	\begin{equation} \label{convex-envelope-s-eq} 
	    \left\{
			\begin{array}{ll}
				 \Lambda_1^s u(x)= 0, \quad & x \in \Omega ,\\[6pt]
				u (x) = g (x), \quad & x \in \R^N\setminus \Omega,
			\end{array}
		\right.
    \end{equation}
    (see \cite{DelpeQR}). This equation also appears in \cite{paper} in connection with
	a proof of ABP estimates for solutions to fractional equations.  It also appears in nonlocal version of Monge–Ampere equation  introduced in \cite{caff}, see \cite{DelpeQR}.
	Close related problems involving maxima/minima among different subspaces than $\mathbb{S}^{N-1}$ 
	were considered in \cite{BdpQR,Bi2,Bi3,HolderRef,RoRu} where the truncated fractional laplacians where introduced.   
	
\subsection{Classical convexity is the limit of fractional convexity
	as $s\nearrow 1$} As we have mentioned, our main goal here
	is to show that the classical convex envelope can be obtained 
    taking the limit as $s\nearrow 1$ of the $s-$convex envelopes. That is,
	    
	\begin{theorem} \label{teo.main.intro} Let $\Omega$ be a smooth, strictly convex and bounded domain in $\R^N$. Given a continuous and bounded exterior datum $g: \R^N \setminus \Omega \mapsto \R$,
	    let $u_s:=\Upsilon^s_g$ be the sequence of $s-$convex envelopes of $g$ inside
	    $\Omega$ and $u:=\Upsilon_g$ the convex envelope of $g|_{\partial \Omega}$. Then, 
	    $\{u_s\}$ converges uniformly in $\overline{\Omega}$ to $u$ as $s\nearrow 1$.
    \end{theorem}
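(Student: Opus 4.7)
The plan is to combine a uniform $L^\infty$ bound with the half-relaxed limits method of Barles--Perthame, using the stability of viscosity solutions to identify the limit as the unique solution to the local problem \eqref{convex-envelope-usual-eq-1}. Since constants are trivially both $s$-convex and admissible in \eqref{convex-envelope-usual}, comparison yields a uniform bound $\|u_s\|_{L^\infty(\R^N)}\le \|g\|_{L^\infty(\R^N\setminus\Omega)}$. Define the upper and lower relaxed limits
\[
\overline u(x):=\limsup_{s\nearrow 1,\, y\to x} u_s(y),\qquad \underline u(x):=\liminf_{s\nearrow 1,\, y\to x} u_s(y),\qquad x\in\overline\Omega,
\]
so that $\overline u$ is upper semicontinuous, $\underline u$ is lower semicontinuous, and $\underline u\le \overline u$. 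The goal reduces to showing $\overline u=\underline u=u$.

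For the boundary behavior, the strict convexity and $C^2$ smoothness of $\partial\Omega$ permit, for each $x_0\in\partial\Omega$ and each $\varepsilon>0$, the construction of smooth barriers $\psi^\pm$ of the form $g(x_0)\pm\varepsilon\pm C\,d(x,\partial\Omega)$ near $x_0$ (cut off away from $x_0$) satisfying $\Lambda_1^s\psi^+\le 0$ and $\Lambda_1^s\psi^-\ge 0$ in a fixed neighborhood of $x_0$, with constants uniform for $s$ close to $1$ (such barriers are essentially those used in \cite{DelpeQR}). Comparison then gives $\psi^-\le u_s\le\psi^+$ near $x_0$, whence $\overline u(x_0)=\underline u(x_0)=g(x_0)$ for every $x_0\in\partial\Omega$.

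The central step is stability: I claim $\overline u$ is a viscosity subsolution of $\inf_{z\in\mathbb{S}^{N-1}}\langle D^2 v\,z,z\rangle=0$ in $\Omega$, and symmetrically $\underline u$ is a supersolution. Let $\phi\in C^2$ touch $\overline u$ strictly from above at $x_0\in\Omega$. Standard viscosity arguments produce $s_n\nearrow 1$ and $x_n\to x_0$ such that $u_{s_n}-\phi$ attains a local maximum at $x_n$; after replacing $\phi$ by $u_{s_n}$ outside a fixed ball $B_r(x_0)$, the test function is admissible in the nonlocal sense and $\Lambda_1^{s_n}\phi(x_n)\le 0$. For each $z\in\mathbb{S}^{N-1}$ the one-dimensional integral splits into a near part, which converges to $\langle D^2\phi(x_0)z,z\rangle$ because the $1$D fractional Laplacian with the normalization $c(s)$ tends to the second derivative on $C^2$ functions, and a tail whose absolute value is bounded by $C\,c(s_n)/(s_n r^{2s_n})\to 0$ since $c(s)\sim 2(1-s)$. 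Selecting $z_n$ that realize the infimum up to $1/n$ and extracting a convergent subsequence $z_n\to z^*$ by compactness of $\mathbb{S}^{N-1}$, one passes to the limit to obtain $\langle D^2\phi(x_0)z^*,z^*\rangle\le 0$, hence $\inf_{z\in\mathbb{S}^{N-1}}\langle D^2\phi(x_0)z,z\rangle\le 0$.

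Finally, by the comparison principle for \eqref{convex-envelope-usual-eq-1} on the strictly convex domain $\Omega$, the previous steps yield $\overline u\le u\le \underline u$ on $\overline\Omega$; combined with $\underline u\le\overline u$, we conclude $\overline u=\underline u=u$. Since $u$ is continuous on $\overline\Omega$, a standard compactness argument upgrades this pointwise equality of relaxed limits to uniform convergence of $u_s$ to $u$. The chief technical difficulty I anticipate lies in the stability step: controlling the nonlocal tail uniformly in the direction $z$ and making the infimum over $\mathbb{S}^{N-1}$ commute with the limit $s\nearrow 1$; that $c(s)\to 0$ is precisely what makes the tail vanish, while compactness of the sphere handles the infimum.
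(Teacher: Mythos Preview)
Your overall strategy---uniform $L^\infty$ bound, half-relaxed limits, stability to show $\overline u$ is a subsolution and $\underline u$ a supersolution of the local equation, then comparison---is exactly the paper's approach, and your identification of the tail control via $c(s)\sim(1-s)$ and the compactness of $\mathbb{S}^{N-1}$ as the key technical points is correct.

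Two issues, however. First, in your stability paragraph the sub- and supersolution cases are swapped. When $\phi$ touches $\overline u$ from above, the viscosity subsolution inequality gives $\inf_z E_{z,\delta}\geq 0$, i.e.\ the directional integral is $\geq 0$ for \emph{every} $z$; passing to the limit for each fixed $z$ yields $\langle D^2\phi(x_0)z,z\rangle\geq 0$ for all $z$, hence the infimum is $\geq 0$. The argument you actually wrote---extracting near-minimizing directions $z_n\to z^*$ to get a single direction with $\langle D^2\phi(x_0)z^*,z^*\rangle\leq 0$---is the one needed for the \emph{supersolution} case ($\underline u$, touched from below). This is a labeling error rather than a gap, since you announce both cases, but as written the displayed inequality $\Lambda_1^{s_n}\phi(x_n)\le 0$ and its consequence are the wrong ones for the subsolution.

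Second, the paper's boundary barriers differ from your sketch and are more robust. For the upper bound it does \emph{not} build an $N$-dimensional supersolution; instead it compares $t\mapsto u_s(x_0+tz)$ along each ray with the explicit $1$D $s$-harmonic function having piecewise-constant exterior data, and uses that this converges (uniformly in $s$) to a linear function. For the lower bound it builds $\widetilde u=g(0)-\eta/2-Kx_1+\varepsilon|x|^2$, truncated to $\min g$ outside a strip; the strictly convex perturbation $\varepsilon|x|^2$ is what makes the near part of the nonlocal integral dominate the tail uniformly for $s$ close to $1$. Your distance-function barriers $g(x_0)\pm\varepsilon\pm C\,d(x,\partial\Omega)$, as stated, are only (non-strictly) convex/concave and not globally defined, so they would need exactly this kind of strict perturbation and careful extension to $\R^N$ before the $s$-convexity/supersolution verification goes through.
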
 
	
	Notice that this result reaffirms that the previous notion of fractional convexity 
	is the natural one since it recovers 
	the classical convex envelope of $g|_{\partial \Omega}$ 
	taking the limit as $s\nearrow 1$. In fact, if one wants to approximate the classical
	convex envelope of a continuous function $\hat{g}\colon \partial \Omega \mapsto \R$ 
	by fractional convex envelopes, one just has to extend $\hat{g}$ to 
	$g\colon\R^N \setminus \Omega\mapsto \R$ in such a way that the 
	extension is continuous and bounded and then use Theorem \ref{teo.main.intro} by taking the limit as 
	$s\nearrow 1$ of the $s-$convex envelopes of the extension $g$.
		
	One of the ideas for the proof of Theorem \ref{teo.main.intro} is based on the, today well known, fact that the fractional Laplacian converges to the classical Laplacian. That is,
	$(-\Delta)^s \to -\Delta$ as $s\nearrow 1$ (see \cite{Hich}) and also \cite{Stinga}). Hence, our result extends this 
	convergence to the $s-$convex envelopes of a datum. Here we use that 
	 in an interval of the real line the solutions to 
	$\Delta^s_1 v =0$ (the 1-dimensional $s-$fractional laplacian) with a fixed exterior datum 
	$g$ converge to the solution to $v'' =0$ in the interval with boundary data $g$ on the endpoints of 
	the interval (see \cite{Hich}).  In this limit one needs the fact that the normalizing constant verifies $c(s) \sim (1-s)$. 
	
	Let us comment briefly the hypotheses on the data, $\Omega$ and $g$. The hypothesis that 
	$\Omega$ is strictly convex is used in order to show that the $s-$convex envelope and the classical 
	convex envelope are continuous up to the boundary for an exterior datum $g$ continuous and 
	bounded (see \cite{BlancRossi,DelpeQR,OS}).
	It is good to bear in mind that the definition of $s-$convexity it is needed to be assumed that        
	the exterior datum $g$ is such that we can solve the Dirichlet problem
	for the $1-$dimensional fractional $s-$laplacian in every segment inside $\Omega$ 
	(this involves values of $g$ in the line that contains this segment). We require  that the datum $g$ is 
	continuous and bounded that guarantees that there is a solution for the $1-$dimensional 
	fractional $s-$laplacian in every segment inside $\Omega$ with exterior datum $g$ that is 
	uniformly bounded above and below by bounds for $g$ (see \cite{DelpeQR}). 
		
	Our strategy to prove Theorem \ref{teo.main.intro}, and to
	show that $u_s$ converge to the usual convex envelope
	as $s\nearrow 1$, is to use the well known half relaxed limits.
	These are given by  
	$$
        u^*(x)\coloneqq\limsup_{s\nearrow 1}{}^*
                u_s(x) = \sup\left\{ \limsup_{k\to \infty, s\nearrow 1}  
                            u_s(x_k) \colon x_k \to x \right\} 
    $$ 
    and 
    $$
        u_*(x)\coloneqq \liminf_{s\nearrow 1}{}_*
                u_s(x) = \inf \left\{ \liminf_{k\to \infty, s\nearrow 1}
                u_s(x_k) \colon x_k \to x\right\}.
    $$
    We show that $u^*$ is a subsolution to \eqref{convex-envelope-usual-eq-1} and $u_*$ 
    is a supersolution. From the comparison principle for  \eqref{convex-envelope-usual-eq-1} we 
    obtain $u^* \leq u_*$  (notice that the reverse inequality trivially holds) and 
    hence we conclude that $u^*=u_*$ proving the desired convergence result.

%%%%%%%%%%%%%%%%%%%%%%%%%%%%%%%%%%%%%%%%%%%%%%%%%%%%%%%%%%%%%%%%%%%%%%%%%%%%%%%%%%%%%%%%%%%%%%%%%%%%%%%%%%%%%	
\section{Proof of Theorem \ref{teo.main.intro}.} \label{sect.2}
%%%%%%%%%%%%%%%%%%%%%%%%%%%%%%%%%%%%%%%%%%%%%%%%%%%%%%%%%%%%%%%%%%%%%%%%%%%%%%%%%%%%%%%%%%%%%%%%%%%%%%%%%%%%%

	First, let us state precisely the notions of viscosity solution
	both for the local problem \eqref{convex-envelope-usual-eq-1} and for the nonlocal problem
	\eqref{convex-envelope-s-eq} that we will use.
	
\subsection{Basic notations and definitions of viscosity solutions.} 

    The
    lower semicontinuous envelope, $\underline{u}$, and the upper semicontinuous envelope, 
    $\overline{u}$, of $u$, are given by
    \[
        \underline{u}(x):=\sup_{r>0}\inf_{y\in B_r(x)} u(y)
            \quad \text{and} \quad
        \overline{u} (x):=\inf_{r>0}\sup_{y\in B_r(x)} u(y).
    \]
    
    We will use the upper and lower envelopes to define sub and supersolutions
    in the viscosity sense. 

    \subsubsection{Classical convexity} Recall form the introduction that
  the convex envelope of a continuous boundary datum $g$ is a
  viscosity solution to \eqref{convex-envelope-usual-eq-1}. Let
  us state the precise meaning of being a viscosity solution
  to \eqref{convex-envelope-usual-eq-1}.

    \begin{definition} \label{def.sol.viscosa.1}
        A function  $u:\Omega \mapsto \R$  verifies
        \begin{equation}
			\label{convex-envelope-usual-eq-1.sect}	
			\left\{
			\begin{array}{ll}
			\displaystyle \inf_{z\in\mathbb{S}^{N-1}} 
			\langle D^2 u(x) z, z \rangle  = 0, \quad &x\in  \Omega, \\[6pt]
		\displaystyle	u(x) = g (x),  \quad &x\in  \partial \Omega,
		\end{array}
		\right.
	\end{equation}
        \emph{in the viscosity sense} if
        \begin{enumerate}
                \item (viscosity supersolution) ${u} \geq g$ on $\partial \Omega$ 
                and for every $\phi\in C^{2}$ such that ${u}-\phi $ has a strict
                minimum at some point $x \in \Omega$,
                we have
                $$
                    \inf_{z\in\mathbb{S}^{N-1}} 
			                    \langle D^2 \phi (x) z, z \rangle  \leq 0.
                $$
               \item (viscosity subsolution) ${u} \leq g$ 
               on $\partial \Omega$ and for every 
               $\psi \in C^{2}$ such that $ {u}-\psi $ has a
                strict maximum at some point $ x \in {\Omega}$, we have
                $$
                    \inf_{z\in\mathbb{S}^{N-1}}
			        \langle D^2 \psi (x) z, z \rangle \geq 0.
             $$
        \end{enumerate}
    \end{definition}
        
        With the previous definition we have the following theorem from 
        \cite{HL1} that gives existence and uniqueness for 
        \eqref{convex-envelope-usual-eq-1}.

    \begin{theorem}[See \cite{HL1}] 
        Let $\Omega$ be a smooth, strictly convex and bounded domain in $\R^N$. 
        Then, for every $g\in C(\partial \Omega)$,  the problem
        \[
            \left\{
            \begin{array}{ll}
                \displaystyle 
                \inf_{z\in\mathbb{S}^{N-1}} 
			                \langle D^2 u (x) z, z \rangle  = 0 , \quad & \mbox{ in } \Omega, \\[5pt]
                u=g , \quad & \mbox{ on } \partial \Omega,
            \end{array}
            \right.
        \]
        has a unique viscosity solution $u\in C(\overline{\Omega})$.

        Moreover, a comparison principle holds. That is, a viscosity supersolution $\overline{u}$ and a viscosity subsolution $\underline{u}$
        are ordered, $\underline{u} \leq \overline{u}$ 
        inside $\Omega$. 
    \end{theorem}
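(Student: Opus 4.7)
The plan is to combine Perron's method with a comparison principle and barriers built from the strict convexity of $\Omega$. The operator $F(M) := \inf_{z\in\mathbb{S}^{N-1}}\langle Mz,z\rangle$ is just the smallest eigenvalue $\lambda_{\min}(M)$; it is monotone in the positive semidefinite order (degenerate elliptic) and $1$-homogeneous, and every affine function is a classical solution since $D^2\ell\equiv 0$ gives $\lambda_{\min}(D^2\ell)=0$. Affine functions will therefore play the role of both comparison functions and boundary barriers.

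The heart of the matter is the comparison principle, and the main obstacle is that $F$ is only degenerately elliptic, so the standard doubling-of-variables argument does not directly produce the required strict inequality. I will bypass this via a strict-subsolution perturbation. Given a bounded usc subsolution $\underline{u}$ and a bounded lsc supersolution $\overline{u}$ with $\underline{u}\le \overline{u}$ on $\partial\Omega$, set
\[
\underline{u}_\eta(x) := \underline{u}(x) + \eta\bigl(|x|^2 - R^2\bigr),
\]
with $R^2 \geq \sup_{\overline\Omega}|x|^2$, so that $\underline{u}_\eta\le \underline{u}$ throughout $\overline\Omega$. Testing against $C^2$ functions shows $\lambda_{\min}(D^2\underline{u}_\eta)\ge 2\eta$ in the viscosity sense, so $\underline{u}_\eta$ is a \emph{strict} subsolution. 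If $\sup_{\overline\Omega}(\underline{u}_\eta-\overline{u})>0$, the supremum is attained at an interior point; applying the Crandall--Ishii--Lions maximum principle to $\underline{u}_\eta(x)-\overline{u}(y)-\frac{\alpha}{2}|x-y|^2$ yields matrices $X\le Y$ with $\lambda_{\min}(X)\ge 2\eta>0$ and $\lambda_{\min}(Y)\le 0$, contradicting the monotonicity of $\lambda_{\min}$ in the PSD order. Letting $\eta\to 0$ gives $\underline{u}\le\overline{u}$.

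For existence, I will run Perron's method by taking
\[
u(x) := \sup\bigl\{w(x) : w \text{ is a bounded usc subsolution with } w\le g \text{ on } \partial\Omega\bigr\}.
\]
The family is nonempty (it contains constants below $\inf_{\partial\Omega}g$) and uniformly bounded above, so $u$ is well defined. Standard viscosity arguments show that $u^*$ is a subsolution in $\Omega$, and if $u_*$ failed to be a supersolution at some interior point, a small paraboloid perturbation would produce a strictly larger admissible subsolution, contradicting maximality. The strict convexity of $\Omega$ finally enters to supply boundary barriers: given $x_0 \in \partial\Omega$ with outer unit normal $\nu(x_0)$, strict convexity gives $(x-x_0)\cdot\nu(x_0)<0$ on $\overline\Omega\setminus\{x_0\}$, with a uniform positive gap outside any ball about $x_0$. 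For $\varepsilon>0$, choosing $K$ large in terms of $\|g\|_\infty$, the modulus of continuity of $g$, and this convexity gap, the affine function
\[
\ell^+(x) := g(x_0)+\varepsilon - K\,(x-x_0)\cdot\nu(x_0)
\]
satisfies $\ell^+\ge g$ on all of $\partial\Omega$; being affine, it is a supersolution, and comparison forces $u\le\ell^+$, so $\limsup_{x\to x_0}u(x)\le g(x_0)+\varepsilon$. A symmetric tilt yields a lower barrier, and sending $\varepsilon\to 0$ gives $u\in C(\overline\Omega)$ with $u|_{\partial\Omega}=g$. Applying comparison to $u$ against any other solution completes the uniqueness statement.
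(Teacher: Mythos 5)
The paper does not prove this theorem; it is quoted verbatim and attributed to Harvey--Lawson \cite{HL1}, so there is no ``paper's own proof'' to compare against. Your blind write-up is a correct, self-contained sketch along the standard viscosity-solutions route, and it is genuinely different from the approach in \cite{HL1}: Harvey and Lawson work in their Dirichlet-duality framework for subequations (closed subsets of the space of symmetric matrices and their duals), whereas you run the Crandall--Ishii--Lions machinery directly for $F(M)=\lambda_{\min}(M)$. The pieces all fit: $\lambda_{\min}$ is degenerate elliptic and invariant under adding multiples of the identity, so $\lambda_{\min}(D^2(\,\cdot\,+\eta|x|^2))=\lambda_{\min}(D^2\,\cdot\,)+2\eta$ makes your quadratic perturbation a strict subsolution; the theorem-on-sums then gives $X\le Y$ (after the usual $\varepsilon=1/\alpha$ choice in the Ishii inequality) and the chain $2\eta\le\lambda_{\min}(X)\le\lambda_{\min}(Y)\le 0$ is the desired contradiction; affine functions are exact solutions, and strict convexity makes the tangent half-space touch $\overline\Omega$ only at $x_0$, which is precisely what lets you tilt an affine barrier to dominate $g$ globally on $\partial\Omega$. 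Your route has the virtue of using only tools the paper already invokes (CIL-type viscosity theory, Perron), at the modest cost of being a sketch: if this were to be written out, the theorem-on-sums step and the Perron bump argument (checking the bumped subsolution is still admissible and agrees with $u$ near $\partial\Omega$) are the two places that would need to be expanded into full detail.
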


\subsubsection{Fractional convexity}
%    Our problem for the $s-$convex envelope is given by
%	\begin{equation} \label{convex-33} 
%		\begin{cases}
%			\displaystyle \inf_{z\in\mathbb{S}^{N-1}} c(s) \int_{\R} 
%						\frac{u(x+tz)-u(x)}{|t|^{1+2s}} 
%						\, dt = 0 & x \in \Omega, \\[6pt]
%					u (x) = g (x) & x \in \R^N\setminus \Omega.
%		\end{cases}
%	\end{equation}
	
	We use the notion of viscosity solution from \cite{BarChasImb}, which is the nonlocal extension of 
	the classical theory, see \cite{CIL}, in order to give the proper definition of viscosity solutions of \eqref{convex-envelope-s-eq}. Moreover, to state the precise definition of solution, we need the following: 
	Given $g\colon\R^N\setminus\Omega \to \R,$  for a function 
	$u \colon \overline{\Omega} \to \R$
	we define the upper $g$-extension of $u$ as
	\begin{equation*}
		u^g(x) \coloneqq 
			\left \{ 
				\begin{array}{ll} 
					u(x), \quad & \mbox{if} \ x \in \Omega, \\[6pt] 
					g(x), \quad & \mbox{if} \ x \in \R^N\setminus\overline{\Omega},\\[6pt]
					\max \{ u(x), g(x) \}, \quad & \mbox{if} \ x \in \partial \Omega.
				\end{array} 
			\right . 
	\end{equation*}
	In the analogous way we define $u_g$, the lower $g$-extension of $u$, replacing $\max$ by $\min$. 
	
	An important fact, that can be easily 
	verified, is that for 
	any continuous function $g \colon \R^N\setminus\Omega \to \R$ and
	any upper semicontinuous function $u \colon \overline{\Omega} \to \R,$  it holds that
	$u^g$ is the upper semicontinuous envelope of  
	$\mathbf{1}_{\overline{\Omega}} + g \mathbf{1}_{\R^N\setminus\overline{\Omega}}.$

	We now introduce a useful notation, for $\delta > 0$ we write
		\[	
		E_{z, \delta}(u, \phi, x)\coloneqq I^1_{z, \delta}(\phi, x)+I^2_{z, \delta}(u, x),
	\] 
	with
	\[
		\begin{array}{l}
			\displaystyle 
				I^1_{z, \delta}(\phi, x)
				\coloneqq c(s)\int_{-\delta}^\delta \frac{\phi (x+t z)-\phi(x)}{|t|^{1+2s}}dt,\\[15pt]
			\displaystyle  
				I^2_{z, \delta}(u, x)\coloneqq c(s)\int_{\R \setminus (-\delta,\delta)} 
					\frac{u^g( x+t z)-u( x)}{|t|^{1+2s}}dt,
		\end{array}
	\]
	and then define 
	\begin{equation*}
		E_\delta(u, \phi, x) \coloneqq 
		- \inf_{z\in \mathbb{S}^{N-1}}
			E_{z, \delta}(u, \phi, x).
	\end{equation*}

	Now we can introduce our notion of viscosity solution testing with $N-$dimensional functions as usual. 

	\begin{definition}\label{defsol.44}
		A bounded upper semicontinuous function $u \colon \R^N \to \R$  
		is a viscosity solution to the Dirichlet problem \eqref{convex-envelope-s-eq} 
		if it verifies
		
		\begin{enumerate}
            \item (viscosity supersolution)
		        ${u} \geq g$ in $\R^N\setminus\overline{\Omega}$ and if 
		        for each $\delta > 0$ and $\phi \in C^2(\R^N)$ such that $x_0$ is a minimum 
		        point of $u- \phi$ in $B_\delta(x_0)$, then
		        \begin{equation*}
			        \begin{array}{ll}
				        \displaystyle
					        E_\delta(u^g, \phi, x_0) \geq 0, & \quad \mbox{if} \ x_0 \in \Omega, \\[6pt]
				        \displaystyle 
					        \min \left\{ E_\delta(u^g, \phi, x_0), u(x_0) - g(x_0) \right\} \geq  0, 
					        & \quad \mbox{if} \ x_0 \in \partial \Omega.
			        \end{array}
		        \end{equation*}
 		    \item (viscosity subsolution)
		        $u \leq g$ in $\R^N\setminus\overline{\Omega}$ and if 
		        for each $\delta > 0$ and $\phi \in C^2(\R^N)$ such that $x_0$ is a maximum 
		        point of $u - \phi$ in $B_\delta(x_0)$, then
		        \begin{equation*}
			        \begin{array}{ll}
				        \displaystyle
					        E_\delta(u^g, \phi, x_0) \leq 0, & \quad \mbox{if} \ x_0 \in \Omega, \\[6pt]
				        \displaystyle 
					        \min \left\{ E_\delta(u^g, \phi, x_0), u(x_0) - g(x_0) \right\} \leq  0, 
					        & \quad \mbox{if} \ x_0 \in \partial \Omega.
			        \end{array}
		        \end{equation*}
		\end{enumerate}
    \end{definition}

	As in the local case, we also have existence and uniqueness for viscosity
    solutions to \eqref{convex-envelope-s-eq} as the following result states.

    \begin{theorem}[See \cite{DelpeQR}] 
        Let $\Omega$ be a smooth, strictly convex and bounded domain in $\R^N$. 
        Then, for every continuous and bounded $g\colon\R^N \setminus \Omega \mapsto \R$, 
        the problem
        $$
            \left\{
                \begin{array}{ll}
                    \displaystyle 
                    \inf_{z\in\mathbb{S}^{N-1}}c(s) \int_{\R} 
						        \frac{u(x+tz)-u(x)}{|t|^{1+2s}} 
						        \, dt  = 0,  \quad & \mbox{ in } \Omega, \\[5pt]
                    u=g,  \quad & \mbox{ in } \R^N \setminus \Omega,
                \end{array}
            \right.
        $$
        has a unique viscosity solution $u\in C(\overline{\Omega})$ with $u=g$ on $\partial \Omega$.

        Moreover, a comparison principle holds.
        A viscosity supersolution $\overline{u}$ and a viscosity subsolution $\underline{u}$
        are ordered, $\underline{u} \leq \overline{u}$ 
        inside $\Omega$. 
    \end{theorem}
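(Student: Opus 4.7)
The plan is to establish the three assertions of the theorem in the order: comparison principle, then uniqueness as an immediate corollary, then existence via a nonlocal Perron method anchored on the sup in \eqref{convex-envelope-s}, with a barrier argument for boundary attainment. The underlying engine is the Crandall--Ishii--Lions viscosity machinery adapted to nonlocal operators following \cite{BarChasImb}.

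For the comparison principle, let $\underline{u}$ be a bounded USC subsolution and $\overline{u}$ a bounded LSC supersolution with $\underline{u}\leq\overline{u}$ on $\R^N\setminus\Omega$, and assume for contradiction that $M:=\sup_{\overline{\Omega}}(\underline{u}-\overline{u})>0$. I would double variables with the quadratic penalty $\Phi_\varepsilon(x,y):=\underline{u}(x)-\overline{u}(y)-|x-y|^2/(2\varepsilon)$ on $\overline{\Omega}\times\overline{\Omega}$, obtaining maximizers $(x_\varepsilon,y_\varepsilon)\to(\hat x,\hat x)$ with $\hat x\in\Omega$, $|x_\varepsilon-y_\varepsilon|^2/\varepsilon\to 0$, and $\underline{u}(x_\varepsilon)-\overline{u}(y_\varepsilon)\to M$. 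Using the quadratic test functions $\phi(x)=\overline{u}(y_\varepsilon)+|x-y_\varepsilon|^2/(2\varepsilon)$ and $\psi(y)=\underline{u}(x_\varepsilon)-|x_\varepsilon-y|^2/(2\varepsilon)$ in Definition \ref{defsol.44}, I pick a near-optimal direction $z_\eta\in\mathbb{S}^{N-1}$ for the supersolution inequality and use the same $z_\eta$ in the subsolution inequality (which holds uniformly in $z$). A direct computation gives
\[
I^1_{z_\eta,\delta}(\phi,x_\varepsilon)-I^1_{z_\eta,\delta}(\psi,y_\varepsilon)=\frac{c(s)}{\varepsilon}\int_{-\delta}^{\delta}|t|^{1-2s}\,dt=O_s\!\left(\frac{\delta^{2-2s}}{\varepsilon}\right),
\]
while the outer difference $I^2_{z_\eta,\delta}(\underline{u}^g,x_\varepsilon)-I^2_{z_\eta,\delta}(\overline{u}^g,y_\varepsilon)$ is pointwise nonpositive by the doubling inequality $\Phi_\varepsilon(x_\varepsilon+tz_\eta,y_\varepsilon+tz_\eta)\leq\Phi_\varepsilon(x_\varepsilon,y_\varepsilon)$ (for shifts staying inside $\overline\Omega$) together with $\underline{u}\leq g\leq\overline{u}$ outside $\Omega$. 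Crucially, since $\Omega$ is bounded, for $|t|$ sufficiently large both shifted points leave $\overline{\Omega}$, where both extensions equal $g$; by continuity of $g$ and $x_\varepsilon,y_\varepsilon\to\hat x$ the integrand there is $(-M+o(1))/|t|^{1+2s}$, producing a strictly negative tail contribution bounded above by $-c_\Omega M$. Choosing $\delta,\varepsilon,\eta$ appropriately, the outer term dominates and we arrive at $0\leq -c_\Omega M/2$, contradicting $M>0$. Uniqueness is then immediate.

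For existence, the family $\mathcal{F}$ of $s$-convex functions on $\overline\Omega$ bounded above by $g$ on $\R^N\setminus\Omega$ is nonempty (the constant $-\|g\|_\infty$ belongs to it), and I would set $u:=\sup_{w\in\mathcal F} w=\Upsilon^s_g$. A standard nonlocal Perron argument shows that the USC envelope of $u$ is a viscosity subsolution, and the usual bump-perturbation technique, localized to a small neighborhood of any candidate failure point, shows that the LSC envelope of $u$ is a supersolution; comparison then forces the two envelopes to agree in $\Omega$, so $u$ is continuous and solves \eqref{convex-envelope-s-eq}. The last and most delicate step is the boundary identity $u=g$ on $\partial\Omega$. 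For each $x_0\in\partial\Omega$ and every $\eta>0$ I would construct, following \cite{DelpeQR,BlancRossi}, smooth barriers $w_\eta^\pm$ satisfying $\Lambda_1^s w_\eta^-\geq 0$ and $\Lambda_1^s w_\eta^+\leq 0$ near $x_0$, $w_\eta^-\leq g\leq w_\eta^+$ on $\R^N\setminus\Omega$, and $w_\eta^\pm(x_0)=g(x_0)\pm\eta$; comparison then pins $u$ between $w_\eta^-$ and $w_\eta^+$ near $x_0$ and yields $u(x)\to g(x_0)$. The main obstacle is this barrier construction: the operator $\Lambda_1^s$ integrates along the entire line through $x_0$ in every direction $z\in\mathbb{S}^{N-1}$, and strict convexity of $\Omega$ is precisely what allows a distance-type function $\pm\dist(\cdot,\partial\Omega)^{\alpha}$ (suitably truncated) to have a controlled sign of $\Lambda_1^s$ uniformly up to the boundary, which is the quantitative input that makes the boundary attainment argument succeed.
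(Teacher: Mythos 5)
The theorem you were asked about is a background result imported from \cite{DelpeQR}; the present paper cites it and does not reprove it, so there is no internal proof to compare against. Taken on its own terms, your sketch has the right skeleton --- nonlocal Crandall--Ishii doubling for comparison as in \cite{BarChasImb}, Perron's method on the family of $s$-convex functions below $g$ for existence, and barriers for boundary attainment --- and this is, at that level of resolution, the strategy used in \cite{DelpeQR}.

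The concrete weakness is the barrier proposal. For the degenerate directional operator $\Lambda_1^s u=\inf_{z\in\mathbb{S}^{N-1}}c(s)\int_{\R}\frac{u(x+tz)-u(x)}{|t|^{1+2s}}\,dt$, a power of the distance is not a barrier of controlled sign. Along a direction $z$ tangent to $\partial\Omega$ at the nearest boundary point, $t\mapsto -\dist(x+tz,\partial\Omega)^{\alpha}$ is locally concave in $t$, so its one-dimensional $s$-fractional Laplacian is negative and hence $\Lambda_1^s(-\dist^{\alpha})<0$: your proposed lower barrier is not $s$-convex and cannot be slotted in. The constructions that do work are exactly the ones displayed in this paper's Lemmas \ref{lema.clave.borde} and \ref{lema.clave.borde.2}: for the one-sided bound from above, compare $u$ along segments from $x_0\in\partial\Omega$ to interior points against the explicit 1D $s$-fractional Dirichlet solution \eqref{convex-44} (strict convexity of $\Omega$ keeps the open segment inside $\Omega$; boundedness and continuity of $g$ furnish the exterior data and a modulus), and for the bound from below, exhibit a globally strictly $s$-convex barrier of the form $g(x_0)-\tfrac{\eta}{2}-K\langle x-x_0,\nu\rangle+\varepsilon|x-x_0|^2$ lying below $g$ in a collar of $\partial\Omega$, again using strict convexity. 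These are tailored to the line-by-line structure of $\Lambda_1^s$ rather than to a scalar distance function. A further ordering issue: you place comparison before barriers, but for a Dirichlet problem posed in the relaxed viscosity sense the barrier step is what certifies that sub- and supersolutions actually agree with $g$ on $\partial\Omega$, which is precisely the hypothesis needed to localize the supremum of $\underline u-\overline u$ in the interior; so the barrier argument must come first, or the comparison principle must be stated only for functions already known to be continuous up to $\partial\Omega$ with the right boundary values.
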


\subsection{The limit as $s\nearrow 1$.} 

	We first show a key lemma that controls (uniformly in $s$ for $s$
	close to 1) from above
	any $s-$convex function close to a boundary point.
	
	\begin{lemma} \label{lema.clave.borde}
	    Assume that $u_s$ is $s-$convex in $\Omega$ with an exterior datum
	    continuous and bounded $g$. Then, given 
	    $x_0\in \partial \Omega$ and $\eta>0$ there exists 
	    $\delta >0$ (uniform in $s$ for $s$ close to 1) 
	    such that
        $$
           u_s(x) \leq g(x_0) + \eta, \quad \mbox{ for }|x-x_0| < \delta.
        $$ 
     \end{lemma}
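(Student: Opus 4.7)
The plan is to use the $s$-convexity of $u_s$ along a tangential chord at $x_0$, together with the concentration at the chord's endpoints of the $1$-dimensional fractional Poisson kernel as $s\nearrow 1$.

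Given $\eta>0$, set $M:=\|g\|_{\infty}$ and use continuity of $g$ at $x_0$ to pick $\rho>0$ with $|g(y)-g(x_0)|<\eta/2$ for every $y\in \R^N\setminus \Omega$ satisfying $|y-x_0|<2\rho$. Let $\nu$ be the outward unit normal to $\partial \Omega$ at $x_0$ and fix a unit vector $z$ with $z\cdot\nu=0$. The $C^2$ smoothness and strict convexity of $\partial \Omega$ near $x_0$ (Taylor-expand $\partial \Omega$ as a graph over its tangent hyperplane) provide $\delta_0\in (0,\rho)$ such that for every $x\in \Omega$ with $|x-x_0|<\delta_0$ the chord
\[
(a_x,b_x):=\{r\in \R : x+rz\in \Omega\}
\]
satisfies $(a_x,b_x)\subset (-\rho/2,\rho/2)$ and $\{x+rz : r\in [a_x,b_x]\}\subset B_{\rho}(x_0)$; note that the chord length is of order $\sqrt{|x-x_0|}$, which is what makes the geometric placement work.

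Next, apply the definition of $s$-convexity to the approximating pair of interior points $y_1^{\eps}:=x+(a_x+\eps)z$, $y_2^{\eps}:=x+(b_x-\eps)z\in \Omega$ and let $\eps\to 0^+$, invoking the stability of viscosity solutions of the $1$-dimensional fractional Laplace equation with respect to its exterior datum. This yields $u_s(x)\leq V(0)$, where $V$ is the unique viscosity solution to
\[
\Delta_1^s V=0 \text{ on } (a_x,b_x),\qquad V(r)=g(x+rz) \text{ for } r\in \R\setminus (a_x,b_x).
\]
Representing $V(0)=\int_{\R\setminus (a_x,b_x)} P_s(0,r)\,g(x+rz)\,dr$ through its Poisson kernel (which has total mass one) and splitting
\[
V(0)-g(x_0)=\int_{r\notin (a_x,b_x),\,|r|<\rho} P_s\bigl[g(x+rz)-g(x_0)\bigr]dr+\int_{|r|\geq \rho} P_s\bigl[g(x+rz)-g(x_0)\bigr]dr,
\]
the ``near'' integral is bounded by $\eta/2$ by the choice of $\rho$ (since $|x+rz-x_0|<2\rho$ there), while the ``far'' integral is bounded by $2M\mu_s$ with $\mu_s:=\int_{|r|\geq \rho} P_s(0,r)\,dr$.

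The \textbf{main obstacle} is to show $\mu_s\to 0$ as $s\nearrow 1$ uniformly in the chord $(a_x,b_x)\subset (-\rho/2,\rho/2)$. After rescaling the chord to $(-1,1)$, this follows from the explicit form of the $1$-dimensional Poisson kernel, schematically $P_s(y,r)=C_s(1-y^2)^s/[(r^2-1)^s|r-y|]$ with $C_s\sim \sin(\pi s)/\pi\sim 1-s$ (this is the manifestation, at the level of the Poisson kernel, of the factor $c(s)\sim 1-s$ that drives $\Delta_1^s\to \partial_{tt}$). Since in the rescaled variables the set $\{|r|\geq \rho\}$ is at distance at least $\rho/(b_x-a_x)\geq 1$ from $\{\pm 1\}$, a direct $L^1$ estimate gives $\mu_s=O(1-s)$; notice that the case of a shorter chord only pushes the ``far'' set further from $\{\pm 1\}$, which strengthens the bound. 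Picking $s$ close enough to $1$ so that $2M\mu_s\leq \eta/2$ yields $u_s(x)\leq V(0)\leq g(x_0)+\eta$ for every $x\in \Omega\cap B_{\delta_0}(x_0)$. For $x\in B_{\delta_0}(x_0)\setminus \Omega$ the inequality is immediate from $u_s\leq g$ outside $\Omega$ together with the choice of $\rho$, and taking $\delta:=\delta_0$ completes the proof.
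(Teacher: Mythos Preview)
Your argument is correct and provides a genuinely different proof from the paper's. The paper works along a \emph{transversal} ray through $x_0$ pointing into $\Omega$: it uses that $t\mapsto u_s(x_0+tz)$ is a one--dimensional subsolution, compares it with an explicit barrier $w_s$ solving $\Delta_1^s w_s=0$ on the interior segment with piecewise constant exterior data (equal to $g(x_0)+\eta/3$ on a short interval $(-\theta,0]$ just outside $x_0$ and to $M=\max g$ elsewhere), and then invokes the uniform convergence of $w_s$ to the affine solution of $w''=0$ as $s\nearrow 1$ to control $w_s$ near $t=0$. Your route instead takes a \emph{tangential} chord through $x$: strict convexity forces the chord to be short and to have its endpoints inside $B_{2\rho}(x_0)$, so the Poisson representation of $V(0)$ already places almost all the mass where $g\approx g(x_0)$; the remaining ``far'' mass is $O(1-s)$ by the explicit kernel and your rescaling observation that the far set always sits at distance at least $1$ from $\{\pm 1\}$ in the normalized picture.

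Both approaches ultimately exploit $c(s)\sim 1-s$, but in complementary ways: the paper's barrier argument is more elementary in that it needs only the qualitative convergence $\Delta_1^s\to\partial_{tt}$ and no explicit kernel, while your argument is more quantitative and makes the uniformity in $x$ transparent through the rescaling. One small simplification available to you: rather than passing to the limit $\eps\to 0$ through interior endpoints and invoking stability, you may quote directly (as the paper does, citing \cite{DelpeQR}) that $r\mapsto u_s(x+rz)$ is a viscosity subsolution of $\Delta_1^s=0$ on $(a_x,b_x)$ and compare with the solution $V$; this yields $u_s(x)\le V(0)$ in one step.
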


    \begin{proof} 
        From \cite{DelpeQR} we know that $u$, an $s-$convex function, 
        is a viscosity subsolution to 
        $$
            \inf_{z\in\mathbb{S}^{N-1}} c(s) \int_{\R} 
					\frac{u(x+tz)-u(x)}{|t|^{1+2s}} 
					\, dt = 0.
		$$
		Hence, by using \cite[Section 3]{DelpeQR}, we get that, for every $x\in \Omega$ and every direction $z\in \mathbb{S}^{N-1}$ 
		we get that
		$$
			c(s) \int_{\R} \frac{u(x+tz)-u(x)}{|t|^{1+2s}} \, dt \geq 0,
		$$
		in the viscosity sense.
		Then, $t \mapsto u(x+tz)$ is a subsolution to the $1-$dimensional fractional Laplacian
		as long as $x+tz \in \Omega$ (that is, in some interval that contains the origin). 
		With this in mind we aim to make a comparison argument in order to obtain an upper bound.

        Then, given $x_0\in\partial\Omega,$ for $\hat{x}\in \Omega$ we consider the line 
        $$
            x_0 + t \frac{\hat{x}-x_0}{|\hat{x}-x_0|}, \quad t \in \R.
        $$
        Notice that, since $\Omega$ is strictly convex we have that 
        $$
            x_0 + t \frac{\hat{x}-x_0}{|\hat{x}-x_0|} \in \Omega,
            \quad \mbox{ for }t\in (0,|x_0-\hat{x}|). 
        $$ 

        On the other side, since $g$ is continuous, given $\eta>0$ there exists a ball $B_r (x_0)$ such that
        $$
            g(y) \leq g(x_0) + \frac{\eta}{3}, \quad y \in \left(\R^N \setminus \Omega\right)
            \cap B_r (x_0).
        $$
        For this radius $r$ there is $\theta>0$ such that
        $$
            x_0 + t \frac{\hat{x}-x_0}{|\hat{x}-x_0|} \in\left(\R^N \setminus \Omega\right)
            \cap B_r (x_0).
        $$
        for $t\in (-\theta, 0)$.

        We take now $w_s(t)$ the solution to 
        \begin{equation} \label{convex-44} 
		        \begin{cases}
			        \displaystyle c(s) \int_{\R} 
						        \frac{w(r+t)-w(t)}{|r|^{1+2s}} 
						        \, dr = 0, \quad  & t \in (0,|x_0-\hat{x}|), \\[6pt]
			        \displaystyle 		w (t) = M:=\max g, & t  \in [|x_0-\hat{x}|, +\infty) \cap
					        (-\infty, -\theta],
					        \\[6pt]
			        \displaystyle w(t) = g(x_0 ) + \frac{\eta}{3}, & 
			        t  \in (-\theta, 0].
		        \end{cases}
	        \end{equation}
	    Notice that, by \cite[Theorem  2.2]{DelpeQR}, $u_s \leq M$ in $\Omega$ and, therefore
	    we have that the exterior condition for $w$ is bigger or equal
	    than the values of $u_s(x_0 + t \frac{\hat{x}-x_0}{|\hat{x}-x_0|})$
	    for $t\not\in  (0,|x_0-\hat{x}|)$. Hence, 
	    using the comparison principle for the $1-$dimensional fractional Laplacian
	    we obtain
	    \begin{equation}\label{mabel}
	        u_s \left(x_0 + t \frac{\hat{x}-x_0}{|\hat{x}-x_0|}\right) 
	        \leq w_s(t), \quad t \in (0,|x_0-\hat{x}|).
	   \end{equation}
		
	    We observe now that $w_s$ converges uniformly as $s \nearrow 1$
	    (see \cite{Hich}) to the solution of 
	    $w''(t) =0$ in $(0,|x_0-\hat{x}|)$ with 
	    $w(0)= g(x_0 ) + \frac{\eta}{3}$
	    and $w (|x_0-\hat{x}|) =M$, that is given by
	    $$
	        w(t) = g(x_0 ) + \frac{\eta}{3} + t \frac{M-g(x_0 ) 
	        + \frac{\eta}{3}}{|x_0-\hat{x}|}.
	    $$
	    Therefore, for $t$ small (let says $t\in (0,\delta)$ for some $\delta>0$) we get
	    $$
	        w(t) \leq  g(x_0 ) + \frac{2\eta}{3}.
	    $$
	    From the uniform convergence of $w_s$ to $w$ we obtain 
		$$
	        w_s(t) \leq  g(x_0 ) + \eta.
	    $$
	    for every $s\in (s_0,1)$ and every $t\in (0,\delta)$ where $\delta$ does not depend on
	    $s$.

	    Hence, from \eqref{mabel}, we conclude that
	    $$
	        u_s \Big(x_0 + t \frac{\hat{x}-x_0}{|\hat{x}-x_0|}\Big)
	        \leq g(x_0 ) + \eta, 
	    $$
	    for every $s\in (s_0,1)$ and every $t\in (0,\kappa)$.
	    The proof is finished. 
    \end{proof}

    Next, we prove a reverse kind of inequality for the $s-$convex envelope.

    \begin{lemma} \label{lema.clave.borde.2}
        Assume that $u_s$ is the solution to \eqref{convex-envelope-s-eq} 
        in $\Omega$ with an exterior datum continuous and bounded $g$. 
        Then, given $x_0\in \partial \Omega$ and $\eta>0$ 
        there exists $\delta >0$ (uniform in $s$ for $s$ close to 1) such that
        $$
            u_s (x) \geq g(x_0) - \eta, \quad \mbox{ for }|x-x_0| < \delta,
        $$ 
    \end{lemma}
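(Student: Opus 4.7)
The plan is to mirror the proof of Lemma~\ref{lema.clave.borde}, now building a 1D barrier that plays the role of a lower bound for $u_s$. Fix $x_0\in\partial\Omega$ and let $\nu$ be the inward unit normal at $x_0$. By the continuity of $g$ at $x_0$, given $\eta>0$ one picks $r,\theta>0$ so that $g(y)\ge g(x_0)-\eta/3$ for every $y\in B_r(x_0)\cap(\R^N\setminus\Omega)$, and so that the segment $\{x_0+t\nu:\,t\in(-\theta,0]\}$ lies in this set. With $\hat x:=x_0+T\nu\in\Omega$ for some fixed $T>0$ small enough, one then lets $w_s(t)$ solve the 1D fractional Dirichlet problem
\[
\begin{cases}
\displaystyle c(s)\int_{\R}\frac{w(t+r)-w(t)}{|r|^{1+2s}}\,dr=0, & t\in(0,T),\\[5pt]
w(t)=m:=\min g, & t\in(-\infty,-\theta]\cup[T,\infty),\\[5pt]
w(t)=g(x_0)-\eta/3, & t\in(-\theta,0].
\end{cases}
\]

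Exactly as in Lemma~\ref{lema.clave.borde}, the 1D convergence result from \cite{Hich} gives that $w_s$ tends uniformly as $s\nearrow 1$ to the linear interpolation
\[
w(t)=g(x_0)-\frac{\eta}{3}+t\cdot\frac{m-g(x_0)+\eta/3}{T},
\]
so $w_s(t)\ge g(x_0)-\eta$ for $t\in(0,\delta)$ with $\delta$ small and uniform in $s$ close to~$1$. Once the pointwise comparison $u_s(x_0+t\nu)\ge w_s(t)$ on $(0,T)$ is in hand, a standard covering argument—applying the same estimate at every nearby boundary point $x_0'\in\partial\Omega$ with its own inward normal, and using the continuity of $g$ to absorb $g(x_0')$ into $g(x_0)\pm\eta$—extends the one-directional bound to a full neighborhood of $x_0$ in $\overline{\Omega}$ with $\delta$ independent of $s$.

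The main obstacle is establishing that comparison, and here the proof genuinely departs from that of Lemma~\ref{lema.clave.borde}: the $s$-convexity of $u_s$ yields the 1D subsolution property along every direction, but $u_s$ is not in general a 1D supersolution along any fixed line, so the dual 1D comparison is unavailable. Instead, I would invoke the $N$-dimensional viscosity comparison principle for \eqref{convex-envelope-s-eq} (the theorem after Definition~\ref{defsol.44}) against an auxiliary $s$-convex subsolution $V_s$ of the Dirichlet problem with $V_s\le g$ on $\R^N\setminus\Omega$ and $V_s\ge w_s$ on the segment. A natural candidate is the one-variable lift $V_s(x)=w_s(\nu\cdot(x-x_0))$: since $\nu$ is the inward normal and $\Omega$ is strictly convex, $\nu\cdot(x-x_0)>0$ for every $x\in\Omega$, and on that range $w_s$ solves the interior 1D equation, so (using the scaling identity for $\Delta_1^s$ along one-dimensional affine reductions) the lift is automatically $s$-convex in $\Omega$. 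The delicate point is that the raw lift only satisfies $V_s\le g$ in a neighborhood of $x_0$ in $\R^N\setminus\Omega$, failing in regions of the slab $\{\nu\cdot(y-x_0)\in(-\theta,0]\}$ which are far from $x_0$; securing the inequality globally requires a careful truncation of $V_s$ in the directions perpendicular to $\nu$, bringing it down to $m$ away from $x_0$ while preserving the 1D subsolution property in every direction inside $\Omega$. This truncation, matched to the geometry of $\Omega$ via its strict convexity, is the genuine technical point of the proof.
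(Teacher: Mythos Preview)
Your approach diverges from the paper's, and the place where you yourself flag the ``genuine technical point'' is in fact a real gap rather than a routine step.

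The paper does \emph{not} mirror Lemma~\ref{lema.clave.borde}. Instead of lifting a 1D solution, it builds an explicit $N$-dimensional barrier from scratch: after rotating so that $x_0=0$ and $\Omega\subset\{x_1>0\}$, it sets $u_1(x)=g(0)-\eta/2-Kx_1$, chooses $K$ large (using strict convexity) so that $u_1<g$ on a strip $(\R^N\setminus\Omega)_\kappa$ around $\partial\Omega$, adds $\varepsilon|x|^2$ to make the function strictly convex, and finally truncates to $\min g$ \emph{outside} $\Omega\cup(\R^N\setminus\Omega)_\kappa$. Because the truncation happens only far from $\Omega$, a direct splitting of the fractional integral at scale $\delta<\kappa$ shows that for $x\in\Omega$ the near part converges to $\langle D^2\widetilde u(x)z,z\rangle\ge\varepsilon$ while the far part is $O(c(s))\to 0$; hence $\widetilde u$ is $s$-convex for $s$ close to~$1$, and $u_s\ge\widetilde u$ follows directly from the definition of the $s$-convex envelope. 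No 1D comparison is used.

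Your proposed route runs into the difficulty you already identify: the lift $V_s(x)=w_s(\nu\cdot(x-x_0))$ fails $V_s\le g$ globally, and truncating it to $m$ in directions perpendicular to $\nu$ will in general destroy the nonlocal subsolution property, since for any $z\not\parallel\nu$ the integral $\int_\R\frac{V_s(x+tz)-V_s(x)}{|t|^{1+2s}}dt$ now sees the truncated (smaller) values and can become negative. You do not indicate how this is to be overcome, and there is no evident mechanism by which strict convexity of $\Omega$ alone repairs it. The paper's construction sidesteps the issue entirely by starting from a $C^2$ strictly convex function---for which $s$-convexity near $s=1$ is a two-line computation---rather than from a solution of a 1D fractional equation.
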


    \begin{proof} 
        To simplify the notation, assume that $x_0 =0$ and that 
        $\Omega \subset \{(x_1,\dots,x_N)\colon x_1 >0\}$ 
        (here we are using that $\Omega$ is strictly convex). 
        Let 
        $$
            u_1 (x_1,\dots,x_N):= g(0)- \frac{\eta}{2} - K x_1.
        $$
        Given $\eta>0$ we can choose $K$ large and $\kappa$ small enough such that
        \begin{equation}\label{bar}
            u_1 (x) < g(x), \quad \mbox{ in }
            \{ x\in \R^N \setminus \Omega \colon d(x,\partial \Omega) \leq \kappa\} 
            \coloneqq  (\R^N \setminus \Omega)_\kappa.
        \end{equation}
        Notice that here we are using that $\Omega$ is strictly convex. We also
        remark that $(\R^N \setminus \Omega)_\kappa$ is a narrow strip around
        $\partial \Omega$ outside $\Omega$. Let us now define
        $$u_2 (x):= u_1 (x)+ \varepsilon |x|^2,\quad x\in\mathbb{R}^{N},$$
        where $\varepsilon>0$ in orden to guarantee that, by \eqref{bar}, 
       \begin{equation}\label{iren}
            u_2 (x)< g(x), \quad x\in (\R^N \setminus \Omega)_\kappa.
        \end{equation}
        Then, the function $u_2$ is a $C^2$ strictly convex
        (it satisfies $\langle D^2 u (x) z,  z \rangle \geq \varepsilon >0 $
        for every $x\in \Omega \cup (\R^N \setminus \Omega)_\kappa$ and every $z \in \mathbb{S}^{N-1}$. 

        Finally, we define 
        $$
            \widetilde{u} (x):= \left\{
                \begin{array}{ll}
                    \displaystyle u_2 (x), \quad & 
                   x \in \Omega \cup (\R^N \setminus \Omega)_\kappa, \\[6pt]
                    \displaystyle \min g, \quad & 
                    x \in \R^N \setminus(\Omega \cup (\R^N \setminus \Omega)_\kappa).
                \end{array} \right.
        $$
     that clearly satisfies
        \begin{equation} \label{cond.borde}
            \widetilde{u}< g(x), \quad  x\in \R^N \setminus \Omega.
        \end{equation}

        We claim that $\widetilde{u}$ is a $s-$convex function for every
        $s$ close to 1. For that, thanks to the results
        in \cite{DelpeQR}, we have to check that 
        $$
            c(s)  \int_{\R} 
					        \frac{\widetilde{u} (x+tz) - \widetilde{u} (x)}{|t|^{1+2s}} 
					        \, dt \geq 0,
        $$
	    for every $x \in \Omega$ and every $z \in \mathbb{S}^{N-1}$. Notice that, since $\widetilde{u}$
        is $C^2 (\Omega)$ we can check the previous inequality pointwise
        without using the viscosity theory.  That is, for $x \in \Omega$ and $z \in \mathbb{S}^{N-1}$ we write the involved singular integral as
        \[ 
            E^s_{z,\delta} (\widetilde{u}, x)
            = I^{1,s}_{z, \delta}(\widetilde{u}, x)+I^{2,s}_{z, \delta}(\widetilde{u}, x),
        \]
        with
        \[
            \displaystyle 
                I^{1,s}_{z, \delta}(\widetilde{u}, x)
                = c(s) \int_{-\delta}^\delta \frac{\widetilde{u} (x+t z)
                -\widetilde{u}(x)}{|t|^{1+2s}}dt,
        \]
        and
        \[    
        \displaystyle  
                I^{2,s}_{z, \delta}(\widetilde{u}, x) = c(s) \int_{\R \setminus (-\delta,\delta)} 
                    \frac{\widetilde{u}( x+t z)-\widetilde{u}(x)}{|t|^{1+2s}}dt.
        \]
        Here we choose $\delta <\kappa$ so that the integral $I^{1,s}_{z, \delta}(\widetilde{u}, x)
        $ involves points in $\Omega \cup (\R^N \setminus \Omega)_\kappa$ when $x \in \Omega$.

        Since $\widetilde{u}$ is bounded, one can check that
        \[
            |I^{2,s}_{z, \delta}(\widetilde{u}, x)| \leq 2\|\widetilde{u}\|_\infty c(s) 
            \int_{\R \setminus (-\delta,\delta)} 
            \frac{1}{|t|^{1+2s}}dt = \frac{2\|\widetilde{u}\|_\infty c(s) }{s\delta^{2s}},
        \]
        which, by using the fact that $c(s) \sim (1-s)$ as $s \nearrow 1$, implies that,
        \[
            |I^{2,s}_{z, \delta}(\widetilde{u}, x)| \to 0, \quad \text{as} \quad s\to 1,
        \]
        for a fixed $\delta>0$. Notice that this limit is uniform for $x\in \Omega$ and for $z \in \mathbb{S}^{N-1}$.  

        For the first integral, using that $\widetilde{u}(x+tz)$ is a second order
        polynomial around $t=0$, we get 
        \[
            I^{1,s}_{z, \delta}(\widetilde{u}, x)
            = \frac{c(s) \delta^{2-2s}}{2(1-s)} \langle D^2\widetilde{u}(x) z, z \rangle . 
        \]
        Hence, using the precise expression for $c(s)$ (that implies $c(s) \sim 2(1-s)$ and hence $\frac{c(s) \delta^{2-2s}}{2(1-s)}\to 1$ as 
        $s \nearrow 1$),
        we obtain
        \[
            \lim_{\delta \to 0^+}\lim_{s\to 1^-} I^{1,s}_{z, \delta}(\widetilde{u}, x) = 
            \langle D^2\widetilde{u} (x) z, z \rangle, 
        \]
        uniformly in $x\in \Omega$ and in $z \in \mathbb{S}^{N-1}$.

        Therefore, collecting the previous results, we obtain that
         it holds that
         \[
            \lim_{\delta \to 0^+}\lim_{s\to 1^-}E^s_{z,\delta} (\widetilde{u}, x) = \langle D^2
             \widetilde{u}(x) z,  z \rangle \geq \varepsilon >0. 
         \]
         Therefore, we get that 
          \[ 
            E^s_{z,\delta} (\widetilde{u}, x) >0,
         \]
         for every $x\in \Omega$ and every $z \in \mathbb{S}^{N-1}$
         and every $s$ close to $1$.
         Hence $\widetilde{u}$ is $s-$convex in $\Omega$ for every $s$ close to $1$.

        Using the definition of the $s-$convex envelope, we get 
        $$
            u_s (x) \geq \widetilde{u} (x), \quad x \in \R^N,
        $$
        and since by \eqref{iren} we have that 
        $$
            \widetilde{u} (x)
            < g(x), \quad \mbox{ for } |x|
            < \kappa,
        $$
        we obtain that there exists $\delta$ small enough ($\delta\leq\kappa$) such that 
        $$
            u_s (x) \geq \widetilde{u} (x) = g(0)- \eta, \quad \mbox{ for } |x|
            < \delta,
        $$
        as we wanted to prove.
    \end{proof}
 
    Now we review the half-relaxed limits of a sequence of functions. 

    \begin{definition} \label{half-relax}
        Let $u_s$ be a bounded sequence of functions. We define the half relaxed limits as follows
        \begin{equation} \label{limsup}
            u^*(x) \coloneqq  \limsup_{s\nearrow 1}{}^*
            u_s(x) = \sup\left\{ \limsup_{k\to \infty, s\nearrow 1}
            u_s(x_k) \colon x_k \to x \right\}
        \end{equation}
        and
        \begin{equation} \label{liminf.99}
            u_*(x) \coloneqq  \liminf_{s\nearrow 1}{}_*
            u_s(x) = \inf \left\{ \liminf_{k\to \infty, s\nearrow 1}
            u_s(x_k) \colon x_k \to x\right\}.
        \end{equation}
    \end{definition}

    Notice that the half relaxed limit $u^*$ is an upper semicontinuous function and the half relaxed limit
    $u_*$ is a lower semicontinuous function. Moreover, we always have
    $$
        u^* \geq u_*.
     $$

    Next, we show some usual properties of the half relaxed limits that are 
    useful to take limits in the viscosity sense. These properties are standard 
    but we include the proof for completeness. 
    
%    {\color{red} Nota: la prueba siguiente implica que $max _{\overline{B_{r}(x_0)}}u_{s_j}(x)-\phi (x)=u_{s_j}(x_j)-\phi (x_j)$
%    Luego usando $x_j\to x_0$ se tiene que ese máximo puede ser tomado en $B_r(x_j)$ que es lo que se necesita tener para los siguientes Lemmas (debido a la Def 2.2)}

    \begin{lemma} \label{lema.propiedades} Let $\{u_s\}$ a sequence of upper semicontinuous functions in $\Omega$ and 
        $$
            u:= u^*=\limsup_{s\nearrow 1}{}^* u_s.
        $$
        Let be $\phi \in C^2$. If that $u-\phi$ has a strict local maximum at the point $x_0$,
        then there is a sequence of indexes $s_j$ and points $x_j$ such that
        \begin{enumerate}
            \item $u_{s_j}-\phi$ has a local maximum at $x_j$.
            \item $x_j$ converges to $x_0$.
        \end{enumerate}
        An analogous statement holds for 
        $$
            u:=u_* = \liminf_{s\nearrow 1}{}_* u_s,
        $$
        when $u-\phi$ has a strict local minimum at the point $x_0$.
    \end{lemma}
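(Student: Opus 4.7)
The plan is the standard half‑relaxed limit argument adapted to a one–parameter family. I will carry out the proof for the $\limsup^*$ statement; the $\liminf_*$ case follows by considering $-u_s$ and $-\phi$.

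First, since $u-\phi$ has a strict local maximum at $x_0$, I would fix $r>0$ small enough so that $u(x_0)-\phi(x_0) > u(x)-\phi(x)$ for every $x\in \overline{B_r(x_0)}\setminus\{x_0\}$, and set $M\coloneqq u(x_0)-\phi(x_0)$. Because each $u_s$ is upper semicontinuous and $\overline{B_r(x_0)}$ is compact, the supremum $M_s\coloneqq \sup_{x\in\overline{B_r(x_0)}}(u_s-\phi)$ is attained at some point $x_s\in\overline{B_r(x_0)}$.

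The next step is to show $\limsup_{s\nearrow 1} M_s = M$. The upper bound $\limsup_s M_s\le M$ follows from the very definition of $u^*$: given any subsequence $x_{s_j}\to\bar x\in\overline{B_r(x_0)}$, we have $\limsup_j u_{s_j}(x_{s_j})\le u(\bar x)$, so $\limsup_j(u_{s_j}(x_{s_j})-\phi(x_{s_j}))\le u(\bar x)-\phi(\bar x)\le M$. For the reverse inequality I would use that $u(x_0)=u^*(x_0)$ is realised, up to $1/k$, by some sequence $s_k\nearrow 1$ and $y_k\to x_0$ with $u_{s_k}(y_k)\to u(x_0)$ (this is exactly what the supremum in \eqref{limsup} gives). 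For $k$ large, $y_k\in\overline{B_r(x_0)}$, so by the choice of $x_{s_k}$
\[
u_{s_k}(x_{s_k})-\phi(x_{s_k})\ \ge\ u_{s_k}(y_k)-\phi(y_k)\ \longrightarrow\ u(x_0)-\phi(x_0)=M,
\]
which yields $\limsup_k(u_{s_k}(x_{s_k})-\phi(x_{s_k}))\ge M$, and consequently $M_{s_k}\to M$ along this sequence.

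The final step is to show that $x_{s_k}\to x_0$. Extract a further subsequence (not relabelled) with $x_{s_k}\to\bar x\in\overline{B_r(x_0)}$. By the upper bound in the definition of $u^*$,
\[
M=\lim_k(u_{s_k}(x_{s_k})-\phi(x_{s_k}))\ \le\ u(\bar x)-\phi(\bar x),
\]
and the strict maximality at $x_0$ forces $\bar x=x_0$. Thus for $k$ large $x_{s_k}\in B_r(x_0)$ is an interior point, so it is in particular a local maximum of $u_{s_k}-\phi$, which is the desired conclusion. Setting $s_j=s_k$ and $x_j=x_{s_k}$ finishes the proof. The only delicate point is the use of the supremum in \eqref{limsup} to produce the sequence $(s_k,y_k)$ realising $u(x_0)$; everything else is compactness and semicontinuity bookkeeping.
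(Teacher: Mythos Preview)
Your proof is correct and follows essentially the same route as the paper's: both pick maximizers of $u_{s_j}-\phi$ on a closed ball, use the definition of $u^*$ to produce $(s_k,y_k)$ realising $u(x_0)$ and hence force the maximum values to converge to $M$, and then argue that the maximizers must accumulate at $x_0$. The only cosmetic difference is that the paper localizes via an annulus/$\delta$-gap argument (showing $x_j\in B_\rho(x_0)$ for arbitrary $\rho$) while you phrase the same step as a cluster-point argument; the content is identical.
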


    \begin{proof}
        Since $u{-\phi}$ has a strict local maximum at $x_0$, we have that 
        for some $r > 0$, $u(y){-\phi(y)} < u(x_0){-\phi(x_0)}$
        for every $y \in B_r(x_0) \setminus \{x_0\}$.
        Let $\rho > 0$ be an arbitrarily small radius. We have that
        \begin{equation} \label{pepe}
            \max_{y\in B_r(x_0)\setminus B_\rho (x_0)}
                u(y) {-\phi(y)}= u(x_0){-\phi(x_0)}-\delta,
        \end{equation}
        for some $\delta > 0$.
        Thus, for $s<1$ and $s$ close to 1, 
        we have 
        \[
            u_{s} (y){-\phi (y)} \leq u(x_0){-\phi(x_0)} - \delta/2,
        \]
        for $y\in B_r(x_0)\setminus B_\rho (x_0)$ (otherwise, we contradict \eqref{pepe} using the definition half relaxed limit of $U$). Now again using the definition of the half relaxed limit, 
        we obtain that there exists a sequence $s_j$, and points $y_j \to x_0$ such that 
        $u_{s_j} (y_j)\to u(x_0)$.
        Let $x_j$ be the point where $u_{s_j}-\phi$
        achieves its maximum in $\overline{B_r}(x_0)$. This maximum cannot be smaller
        than $u_{s_j}(y_j)-\phi (y_j)$, that converges to $u(x_0)-\phi (x_0)$. Therefore, 
        for large enough $j$, we have that $x_j \in B_\rho(x_0)$. 
        Since $\rho > 0$
        is arbitrary, we conclude that $$x_j \to x_0,\quad j \to \infty.$$ Moreover,
        $$\max _{\overline{B_{r}}(x_0)}u_{s_j}(x)-\phi (x)=\max _{\overline{B_{\widetilde{r}}}(x_j)}u_{s_j}(x)-\phi (x)=u_{s_j}(x_j)-\phi (x_j),$$
        for some $\widetilde{r}>0.$
    \end{proof}

    Now we are ready to show that the upper half relaxed
    limit is a subsolution to the limit problem. 

    \begin{lemma} \label{lema.subsol}
        Let $u_s$ be a sequence of viscosity subsolutions to \eqref{convex-envelope-s-eq}. 
        Then, the half relaxed
        limit
        $$
        u^* = \limsup_{s\nearrow 1}{}^* u_s,
        $$
        is a viscosity subsolution to \eqref{convex-envelope-usual-eq-1}.
    \end{lemma}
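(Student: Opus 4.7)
My plan is to verify both requirements in Definition \ref{def.sol.viscosa.1} for $u^*$. The boundary inequality $u^*\leq g$ on $\partial\Omega$ is essentially immediate from Lemma \ref{lema.clave.borde}: the uniform upper bound $u_s(x)\leq g(x_0)+\eta$ that it furnishes, together with the exterior equality $u_s=g$ outside $\Omega$ and continuity of $g$, yields $u^*(x_0)\leq g(x_0)+\eta$ for arbitrary $\eta>0$, and then $\eta\to 0$.

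For the interior test, let $\psi\in C^2$ be such that $u^*-\psi$ has a strict local maximum at some $x_0\in\Omega$. Lemma \ref{lema.propiedades} produces a subsequence $s_j\nearrow 1$ and points $x_j\to x_0$ (interior to $\Omega$ for $j$ large) at which $u_{s_j}-\psi$ attains a local maximum on a ball $\overline{B_{\tilde r}}(x_j)$. Fixing $\delta\in(0,\tilde r)$, the nonlocal subsolution property (Definition \ref{defsol.44}) reads $\inf_{z\in\mathbb{S}^{N-1}} E_{z,\delta}(u_{s_j}^g,\psi,x_j)\geq 0$, and since a nonnegative infimum forces every value to be nonnegative, I obtain $E_{z,\delta}(u_{s_j}^g,\psi,x_j)\geq 0$ for each individual direction $z$. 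I then pass to the limit as $j\to\infty$ with $z$ and $\delta$ held fixed, reusing verbatim the two estimates from the proof of Lemma \ref{lema.clave.borde.2}: the long-range term is bounded by $2\|g\|_\infty c(s_j)/(s_j\delta^{2s_j})$ (using the uniform bound $\|u_s\|_\infty\leq\|g\|_\infty$ from the comparison principle) and vanishes because $c(s)\sim 2(1-s)$; while a second-order Taylor expansion of $\psi$ at $x_j$, combined with $c(s)\delta^{2-2s}/(2(1-s))\to 1$, gives $I^1_{z,\delta}(\psi,x_j)\to \langle D^2\psi(x_0)z,z\rangle$. The resulting inequality $\langle D^2\psi(x_0)z,z\rangle\geq 0$ then holds for every $z$, and taking the infimum over $\mathbb{S}^{N-1}$ finishes the argument.

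The principal difficulty, already resolved in Lemma \ref{lema.clave.borde.2}, is this asymptotic analysis of the singular integral; it leans crucially on the explicit normalization $c(s)\sim 2(1-s)$ from \eqref{cte}, which is precisely what makes the short-range part reproduce the classical second directional derivative while simultaneously killing the nonlocal tail. Given that analysis is available off the shelf, the remaining work in this lemma is mostly organizational: the contact sequence from Lemma \ref{lema.propiedades} and the elementary observation that an infimum inequality transfers to each direction together uncouple the limit from $z$, so the whole argument can be run direction by direction.
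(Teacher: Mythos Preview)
Your proposal is correct and follows essentially the same approach as the paper's own proof: both use Lemma~\ref{lema.clave.borde} for the boundary inequality, invoke Lemma~\ref{lema.propiedades} to produce the contact sequence $(s_j,x_j)$, observe that the subsolution inequality $\inf_z E_{z,\delta}\geq 0$ passes to each individual direction $z$, and then reuse the asymptotic computations from Lemma~\ref{lema.clave.borde.2} (tail vanishes because $c(s)\sim 2(1-s)$, short-range part recovers $\langle D^2\psi(x_0)z,z\rangle$) to conclude. The only cosmetic point is that for a general $C^2$ test function the Taylor expansion leaves an $o_\delta(1)$ error in the limit of $I^1_{z,\delta}$, so strictly speaking one should let $\delta\to 0$ after $j\to\infty$; the paper is equally informal about this step.
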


\begin{proof}
     {
        Let $\phi$ be a $C^2$ function such that $u^*-\phi$ 
        has a strict local maximum at the point $x_0 \in \Omega$.
        Then there exists a $r>0$ such that $u^*-\phi$ is the maximum
        of  $u-\phi$ in $B_r(x_0)\subset\subset\Omega$.
        From Lemma \ref{lema.propiedades}
        we obtain that there exist $s_j \nearrow 1$ and points 
        $x_j \to x_0$  such that {$x_j$ is the maximum of $u_{s_j}-\phi$ in
        $B_r(x_j)$}. From the fact that $u_{s_j}$ is a viscosity subsolution to 
        $$
                \inf_{z\in\mathbb{S}^{N-1}}  c(s_j) \int_{\R} 
					    \frac{u(x+tz)-u(x)}{|t|^{1+2s_j}} 
					    \, dt = 0,
	    $$
	    we get that 
        $$
                        \inf_{z\in\mathbb{S}^{N-1}}
                        c(s_j)\left(  \int_{-\delta}^\delta 
					        \frac{\phi (x_j+tz) - \phi(x_j)}{|t|^{1+2s_j}} 
					    \, dt +   \int_{\R\setminus(-\delta,\delta)}  \!\!
					        \frac{u_{s_j} (x_j+tz) - u_{s_j}(x_j)}{|t|^{1+2s_j}} 
					    \, dt \right)\geq 0,
	    $$
	    for some $\delta>0$ independent of $j$.
	    Therefore, for every direction $z \in \mathbb{S}^{N-1}$  we have that,
	    \begin{equation} \label{pp}
                c(s_j)\left(   \int_{-\delta}^\delta 
					        \frac{\phi (x_j+tz) - \phi(x_j)}{|t|^{1+2s_j}} 
					    \, dt + \int_{\R\setminus(-\delta,\delta)} \!\!
					        \frac{u_{s_j} (x_j+tz) - u_{s_j}(x_j)}{|t|^{1+2s_j}} 
					    \, dt\right) \geq 0.	
	    \end{equation}
	    Now, using that $\phi \in C^2$ and $\{u_{s_j}\}$ is bounded,
    passing to the limit as $j\to\infty$ 
    (and proceeding in a similar way to the proof of Lemma \ref{lema.clave.borde.2})
    we obtain
    $$
        \langle D^2 \phi (x_0) z, z \rangle \geq 0,
    $$
    for every $z \in \mathbb{S}^{N-1}$. Thus,
    \begin{equation}\label{ecu}
            \inf_{z\in\mathbb{S}^{N-1}}
			\langle D^2 \phi(x_0) z, z \rangle \geq  0.
\end{equation}
}

To prove an inequality involving $g$ on the boundary we use Lemma \ref{lema.clave.borde}. That is,
given $x_0\in \partial \Omega$ and $\eta>0$ we get that there exists $\delta >0$ (uniform in $s$ for $s$ close to 1) such that
$$
u_s (x) \leq g(x_0) + \eta, \quad \mbox{ for }|x-x_0| < \delta.
$$ 
Thus
$$
\limsup_{s\nearrow 1}{}^* u_s (x) = u^* (x) \leq g(x) +\eta, \quad x \in \partial \Omega.
$$
Since $\eta$ is arbitrary, this shows that
$$
\limsup_{s\nearrow 1}{}^* u_s (x) = u^* (x) \leq g(x), \quad x \in \partial \Omega.
$$
Thus by \eqref{ecu} we conclude that $u^*$ is a viscosity subsolution to 
\eqref{convex-envelope-usual-eq-1}.
    \end{proof}

Analogously, we also have that the lower half relaxed
limit is a supersolution to the limit problem.

\begin{lemma} \label{lema.supersol}
Let $u_s$ be a sequence of viscosity supersolutions to \eqref{convex-envelope-s-eq}. Then, the half relaxed
limit
$$
u_* = \liminf_{s\nearrow 1}{}_* u_s,
$$
is a viscosity supersolution to \eqref{convex-envelope-usual-eq-1}.
\end{lemma}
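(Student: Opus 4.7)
The plan is to mirror Lemma \ref{lema.subsol} with one essential modification. For a subsolution the condition $\inf_{z} E_{z,\delta} \geq 0$ transfers pointwise to \emph{every} direction $z$, so one can test with an arbitrary fixed direction and then take the infimum only at the end. For a supersolution the condition reads $\inf_z E_{z,\delta} \leq 0$, which only produces an (approximately) minimizing direction that may depend on $j$, so we must select such a direction at each step of the approximation and track its limit.

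Concretely, let $\phi \in C^2$ be such that $u_* - \phi$ has a strict local minimum at some $x_0 \in \Omega$. By the $u_*$--analog of Lemma \ref{lema.propiedades}, pick sequences $s_j \nearrow 1$ and $x_j \to x_0$ for which $u_{s_j} - \phi$ attains a local minimum at $x_j$. From Definition \ref{defsol.44}, the supersolution condition yields
$$
\inf_{z \in \mathbb{S}^{N-1}} E_{z,\delta}(u_{s_j}^g, \phi, x_j) \leq 0
$$
for every $\delta > 0$. Fix $\delta > 0$ with $B_\delta(x_0) \subset\subset \Omega$; by compactness of $\mathbb{S}^{N-1}$ the infimum is attained at some $z_j$, and up to a subsequence $z_j \to z_0$.

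Next, send $j \to \infty$ exactly as in the proof of Lemma \ref{lema.clave.borde.2}. The tail $I^2_{z_j,\delta}(u_{s_j}^g, x_j)$ is dominated by $\frac{2\|u_{s_j}^g\|_\infty\, c(s_j)}{s_j \delta^{2 s_j}}$, which vanishes because $c(s_j) \sim 2(1-s_j) \to 0$ and $u_{s_j}^g$ is uniformly bounded by the bounds on $g$. For the local part, Taylor-expanding $\phi$ around $x_j$ gives
$$
I^1_{z_j,\delta}(\phi, x_j) = \frac{c(s_j)\, \delta^{2-2s_j}}{2(1-s_j)} \langle D^2 \phi(x_j) z_j, z_j \rangle + o(1),
$$
the odd part integrating to zero and the cubic remainder being $O(c(s_j)\delta^{3-2s_j}) = o(1)$. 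The prefactor tends to $1$, so the limit is $\langle D^2 \phi(x_0) z_0, z_0 \rangle \leq 0$, which yields $\inf_{z \in \mathbb{S}^{N-1}} \langle D^2 \phi(x_0) z, z \rangle \leq 0$ as required. The boundary inequality follows from Lemma \ref{lema.clave.borde.2}: for every $x_0 \in \partial \Omega$ and $\eta > 0$ one has $u_{s_j}(x) \geq g(x_0) - \eta$ for $x$ near $x_0$ and $s_j$ close to $1$, hence $u_*(x_0) \geq g(x_0) - \eta$, and letting $\eta \to 0^+$ produces $u_* \geq g$ on $\partial \Omega$.

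The main obstacle is the joint limit in $(s_j, x_j, z_j)$: because the minimizing direction $z_j$ depends on $j$, one cannot freeze a single direction as in the subsolution case. This is overcome because the Taylor remainder bound and the tail bound are uniform in the direction $z$ (they involve only $\|D^2 \phi\|_\infty$ on a fixed ball and $\|u_{s_j}^g\|_\infty$), which is precisely the uniformity already established in the proof of Lemma \ref{lema.clave.borde.2}; consequently the limit commutes with any convergent subsequence $z_j \to z_0$.
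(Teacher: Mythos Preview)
Your proof is correct and follows essentially the same route as the paper's: pick (approximately) minimizing directions $z_j$ at each step, extract a convergent subsequence $z_j\to z_0$, pass to the limit using the uniform-in-$z$ tail and Taylor remainder estimates, and treat the boundary inequality via Lemma~\ref{lema.clave.borde.2}. The only cosmetic difference is that the paper selects $z_j$ with $E_{z_j,\delta}\le \tfrac{1}{j}$ instead of invoking attainment of the infimum, which sidesteps the need to verify continuity of $z\mapsto E_{z,\delta}$ but leads to the identical limiting inequality.
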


\begin{proof}
    {
        The proof is similar to the one of Lemma \ref{lema.subsol} with a small modification 
        since here we will obtain an inequality for some $z \in \mathbb{S}^{N-1}$
        and we need to consider a sequence of directions $z_j$
        in the singular integrals when passing to the limit.
        For that, let $\phi$ be a $C^2$ function such that $u_{*}-\phi$ has a strict local minimum at the point 
        $x_0 \in \Omega$. As before, from Lemma \ref{lema.propiedades} 
        we obtain that there exist $s_j \nearrow 1$ and points $x_j \to x_0$  such that 
        $u_{s_j}-\phi$ achieves a local minimum at $x_j$. More specifically, $x_j$ is the minimum of 
        {$u_{s_j}-\phi$ in $B_r(x_j)\subset\subset\Omega$} for some $r>0$ independent of $j.$

        From the fact that $u_{s_j}$ is a viscosity supersolution to 
        \[
            \inf_{z\in\mathbb{S}^{N-1}} c(s_j) \int_{\R} 
					    \frac{u(x+tz)-u(x)}{|t|^{1+2{s_j}}} 
					    \, dt = 0,
	    \]
        we get that 
        \[
            \inf_{z\in\mathbb{S}^{N-1}} c(s_j)\left( \int_{-\delta}^\delta 
					    \frac{\phi (x_j+tz) - \phi(x_j)}{|t|^{1+2s_j}} 
					    \, dt +\int_{\R\setminus(-\delta,\delta)}\!\!
					    \frac{u_{s_j} (x_j+tz) - u_{s_j}(x_j)}{|t|^{1+2s_j}} 
					    \, dt  \right)\leq 0.
	    \]
	    Therefore, we have a sequence of directions $z_j \in \mathbb{S}^{N-1}$ 
	    (extracting a subsequence if needed we can assume that $z_j \to z$) such that 
	    \begin{equation} \label{eq.88}
                c(s_j)\left( \int_{-\delta}^\delta 
					    \frac{\phi (x_j+tz_j) - \phi(x_j)}{|t|^{1+2s_j}} 
					    \, dt + \int_{\R\setminus(-\delta,\delta)}\!\!
					    \frac{u_{s_j} (x_j+tz_j) - u_{s_j}(x_j)}{|t|^{1+2s_j}} 
					    \, dt  \right)
				     \leq \frac1j.
	    \end{equation}
				    
	    In order to pass to the limit as $j\to \infty$ we need to take care of the fact that the directions
        at which we are computing the $1-$dimensional fractional derivative also depend on $j$.
        
        Using that $\{u_{s_j}\}$ is bounded, as in the proof of Lemma \ref{lema.clave.borde.2} one can 
        check that
        \[
            c(s_j)\int_{\R\setminus(-\delta,\delta)}\!\!
					\frac{u_{s_j} (x_j+tz_j) - u_{s_j}(x_j)}{|t|^{1+2s_j}} \to 0, 
					\quad \text{as} \quad j\to\infty.
		\]
        
        For the first integral, using a second order Taylor expansion of $\phi(x_{x_j}+tz_{j})$ 
        around $t=0$, we get 
        \begin{align*}
	         c(s_j)\int_{-\delta}^\delta 
					    \frac{\phi (x_j+tz_j) - \phi(x_j)}{|t|^{1+2s_j}} 
					    \, dt &=
					    \frac{c(s_j) \delta^{2-2s_j}}{2(1-s_j)} 
					    \left( \langle D^2\phi(x_{s_j}) z_{j}, z_{j} \rangle + o_\delta(1)\right),
        \end{align*}
        Hence, using the precise expression for $c(s_j)$ (that implies $c(s_j) \sim (1-s_j)$ 
        as $j \to \infty$), recalling that that $x_j\to x_0$ and $z_{j} \to z$, and using continuity of 
        $\langle D^2\phi(x) z, z \rangle$ in both $x$ and $z$, we obtain
        \[
           \lim_{j\to\infty} c(s_j)\int_{-\delta}^\delta 
					    \frac{\phi (x_j+tz_j) - \phi(x_j)}{|t|^{1+2s_j}} 
					    \, dt= \langle D^2\phi(x_0) z, z \rangle.		    
         \]
        Therefore, collecting the previous results, we obtain that
        for any convergent sequence $z_{j} \to z$, it holds that
        \begin{align*}
	        \langle D^2\phi(x_0) z, z \rangle=& \lim_{j\to\infty}
	         c(s_j)\int_{-\delta}^\delta 
					    \frac{\phi (x_j+tz_j) - \phi(x_j)}{|t|^{1+2s_j}} 
					    \, dt\\
					    &\quad +c(s_j)\int_{\R\setminus(-\delta,\delta)}\!\!
					\frac{u_{s_j} (x_j+tz_j) - u_{s_j}(x_j)}{|t|^{1+2s_j}} \, dt\\
					&\le0.        \end{align*}
		Hence, we have that 
\begin{equation}\label{laotra}
            \inf_{z\in\mathbb{S}^{N-1}} 
			\langle D^2 \phi(x_0) z, z \rangle \leq  0.
	\end{equation}
		%and we obtain that $u$ is a viscosity supersolution to 
       % the equation in \eqref{convex-envelope-usual-eq-1}.
       }
        
        To prove an inequality involving $g$ on the boundary we use, one more time, the Lemma \ref{lema.clave.borde.2} 
       to get that, given $x_0\in \partial \Omega$ and $\eta>0$, there exists $\delta >0$   
        (uniform in $s$ for $s$ close to 1) such that
        $$
            u_s (x) \geq g(x_0) - \eta, \quad \mbox{ for }|x-x_0| < \delta.
        $$ 
Thus, 
        $$
            \liminf_{s\nearrow 1}{}_* u_s (x) = u_* (x) \geq g(x) - \eta, \quad x \in \partial \Omega.
        $$
        Since $\eta$ is arbitrary, this shows that
        $$
            u_* (x) \geq g(x), \quad x \in \partial \Omega.
        $$
        Thus, by \eqref{laotra} we conclude that $u_*$ is a viscosity supersolution to \eqref{convex-envelope-usual-eq-1}.
    \end{proof}

    Now we are ready to prove our main result.

    \begin{proof}[Proof of Theorem \ref{teo.main.intro}]
        From Lemmas \ref{lema.subsol} and  \ref{lema.supersol}, we get that 
        $u^*$ and $u_*$ are viscosity sub and supersolution to \eqref{convex-envelope-usual-eq-1},
        respectively. From the comparison principle for \eqref{convex-envelope-usual-eq-1}
        (see \cite{DelpeQR}) we obtain that
        $$
            u^* (x)\leq u_* (x), \qquad x \in \overline{\Omega},
        $$
        and, since the half relaxed limits always verify the reverse inequality, 
        $$
            u^* (x)\geq u_* (x), \qquad x \in \overline{\Omega},
        $$ 
        we conclude that
        $$
            u^* (x) = u_* (x) , \qquad x \in \overline{\Omega}.
        $$
        Hence there exists the uniform limit
        $$
            u = \lim_{s\nearrow 1} u_s.
        $$

        This limit $u$ is continuous since $u=u^* = u_*$ and $u^*$ is upper 
        semicontinuous and $u_*$ is  lower semicontinuous.  Moreover, $u$ solves the equation in 
        \eqref{convex-envelope-usual-eq-1} in the viscosity sense.

        Concerning the boundary condition, we have that
        $$
            u(x) = u^* (x) = u_* (x)  = g(x), \quad x \in \partial \Omega,
        $$
        that is, the boundary datum is attained pointwise. 

        Hence $u$ is characterized 
        as being the unique solution to \eqref{convex-envelope-usual-eq-1} and then it is 
        the usual convex envelope of $g|_{\partial \Omega}$ inside $\Omega$. 
    \end{proof}

\section*{Acknowledgments}

%BB was partially supported by XXXXX grant.

    LMDP and JDR are partially supported by
    Agencia Nacional de Promoción de la Investigación, el Desarrollo
    Tecnológico y la Innovación PICT-2018-3183, and
    PICT-2019-00985 and UBACYT 20020190100367 (Argentina).
    
    AQ was partially supported by Fondecyt Grant No. 1231585 (Chile).

\end{document}